%
\documentclass[12pt, reqno]{amsart}
\usepackage{amsmath, amsthm, amscd, amsfonts, amssymb}
\usepackage[bookmarksnumbered]{hyperref}

\setlength{\textwidth}{16 cm} \setlength{\textheight}{8.5in}
\setlength{\evensidemargin}{-0.2in}
\setlength{\oddsidemargin}{-0.2in}

\newtheorem{theorem}{Theorem}[section]
\newtheorem{lemma}[theorem]{Lemma}

\newtheorem{corollary}[theorem]{Corollary}
\theoremstyle{definition}
\newtheorem{definition}[theorem]{Definition}

\theoremstyle{remark}

\numberwithin{equation}{section}

\begin{document}

\title[]{Nearly generalized Jordan derivations }
\author[M. Eshaghi Gordji and N. Ghobadipour]{M. Eshaghi Gordji and N. Ghobadipour}
\address{Department of Mathematics, Semnan University, P. O. Box 35195-363, Semnan, Iran}
\email{madjid.eshaghi@gmail.com  and  ghobadipour.n@gmail.com}

 \subjclass[2000]{Primary 39B82;
Secondary 39B52, 46H25.}

\keywords{Hyers-Ulam-Rassias stability;  generalized  derivation;
generalized Jordan derivation; Banach algebra.}

\begin{abstract} Let $A$ be an algebra and let $X$ be an $A$-bimodule. A $\Bbb C-$linear
mapping $d:A \to X$ is called a generalized Jordan derivation if
there exists a Jordan derivation (in the usual sense) $\delta:A \to
X$ such that $d(a^2)=ad(a)+\delta(a)a$ for all $a \in A.$ The main
purpose of this paper to prove the  Hyers-Ulam-Rassias
 stability and superstability of the generalized
Jordan derivations.
\end{abstract}
\maketitle


\section{Introduction }
We say a functional equation $(\xi)$ is stable if any function $g$
satisfying the equation $(\xi)$ approximately is near to a true
solution of $(\xi).$ The equation $(\xi)$ is called superstable if
every approximate solution of $(\xi)$ is an exact solution. It
seems that the stability problem of functional equations had been
first raised by Ulam (cf. \cite{Ul}):  Let $(G_1,.)$ be a group
and let $(G_2,*)$ be a metric group with the metric $d(.,.).$
Given $\epsilon >0$, dose there exist a $\delta
>0$, such that if a mapping $h:G_1\longrightarrow G_2$ satisfies the
inequality $d(h(x.y),h(x)*h(y)) <\delta$ for all $x,y\in G_1$,
then there exists a homomorphism $H:G_1\longrightarrow G_2$ with
$d(h(x),H(x))<\epsilon$ for all $x\in G_1?$ In the other words,
Under what condition dose there exists a homomorphism near an
approximate homomorphism? The concept of stability for functional
equation arises when we replace the functional equation by an
inequality which acts as a perturbation of the equation. In 1941,
D. H. Hyers \cite{Hy} gave a first affirmative  answer to the
question of Ulam for Banach spaces. Let $f:{E}\longrightarrow{E'}$
be a mapping between Banach spaces such that
$$\|f(x+y)-f(x)-f(y)\|\leq \delta $$
for all $x,y\in E,$ and for some $\delta>0.$ Then there exists a
unique additive mapping $T:{E}\longrightarrow{E'}$ such that
$$\|f(x)-T(x)\|\leq \delta$$
for all $x\in E.$ Now assume that $E$ and $E^{'}$ are real normed
spaces with $E^{'}$ complete, $f:E \to E^{'}$ is a mapping such
that for each fixed $x \in E$ the mapping $t\rightarrowtail f(tx)$
is continuous on $\Bbb R,$ and that there exist $\delta\geq 0$ and
$p\neq 1$ such that $$\|f(x+y)-f(x)-f(y)\|\leq \delta
(\|x\|^p+\|y\|^p)$$ for all $x,y \in E.$ It was shown by Rassias
\cite{Ra} for $p\in[0,1)~$ (and indeed $p<1$) and Gajda \cite{Gaj}
following the same approach as in \cite{Ra} for $p>1$ that there
exists a unique linear map $T:E \to E^{'}$ such that
$$\|f(x)-T(x)\| \leq \frac{2 \delta \|x\|^p}{|2^p-2|}$$ for all $x
\in E.$ This phenomenon is called Hyers-Ulam-Rassias stability. it
is shown that there is no
analogue of Rassias' result for $p=1$ (see \cite{Gaj,Ra}).\\
In 1994, a generalization of the Rassias' theorem was obtained by
G\v avruta as follows \cite{G}.\\
Suppose (G,+) is an abelian group, $E$ is a Banach space, and that
the so-called admissible control function $\varphi:G \times G \to
\Bbb R$ satisfies
$$\tilde{\varphi}(x,y):=2^{-1}\sum_{n=0}^{\infty}2^{-n}\varphi(2^nx,2^ny)<
\infty$$ for all $x,y \in G.$ If $f:G \to E$ is a mapping with
$$\|f(x+y)-f(x)-f(y)\|\leq \varphi(x,y)$$ for all $x,y \in G,$
then there exists a unique mapping $T:G \to E$ such that
$T(x+y)=T(x)+T(y)$ and $\|f(x)-T(x)\| \leq \tilde{\varphi}(x,x)$
for all $x,y \in G.$\\
Since then several stability problems of various functional
equations have been investigated by many mathematicians. The
reader is referred to \cite{Cz,Ra} for a comprehensive account of
the subject.\\
Generalized derivations and generalized  Jordan derivations first
appeared in the context of operator algebras \cite{Ma}. Later,
these were introduced in the framework of
pure algebra \cite{FZ,HV}.\\
\begin{definition}
Let $A$ be an algebra and let $X$ be an $A$-bimodule. A linear
mapping $d:A \to X$ is called a generalized derivation if there
exists a derivation (in the usual sense) $\delta:A \to X$ such
that $d(ab)=ad(b)+\delta(a)b$ for all $a,b \in A.$\\ Every right
multiplier is a generalized derivation.
\end{definition}
\begin{definition}\cite{FZ}
Let $A$ be an algebra and let $X$ be an $A$-bimodule. A linear
mapping $d:A \to X$ is called a generalized Jordan derivation if
there exists a Jordan derivation (in the usual sense) $\delta:A
\to X$ such that $d(a^2)=ad(a)+\delta(a)a$ for all $a \in A.$\\
The stability of derivations was studied by Park in \cite{Pa}. M.
Moslehian \cite{Mo} investigated the Hyers-Ulam-Rassias stability
of generalized derivations from a unital normed algebra A to a unit linked Banach  A-bimodule.\\
In this paper, we investigate the  Hyers-Ulam-Rassias stability and
moreover prove superstability of generalized Jordan derivations.

\end{definition}
\section{Main result}
In this section, we investigate Hyers-Ulam-Rassias stability of
generalized Jordan derivations from a unital Banach algebra to a
unit linked Banach  A-bimodule and use some ideas of \cite{Mo,Pa}.\\
Throughout this section, assume that $A$ is a unital Banach
algebra and, let $X$ be unit linked Banach  A-bimodule.\\
We need the following lemma in the main results of the present
paper.
\begin{lemma}\cite{Par}
Let $U,V$ be linear spaces and let $f:U \to V$ be an additive
mapping such that $f(\lambda x)=\lambda f(x)$ for all $x \in U$
and all $\lambda \in \Bbb T^1:=\{\lambda \in \Bbb C~;
|\lambda|=1\}.$ Then the mapping $f$ is $\Bbb C$-linear.
\end{lemma}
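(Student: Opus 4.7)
The plan is to reduce $\mathbb{C}$-linearity to showing $f(\lambda x)=\lambda f(x)$ for every $\lambda\in\mathbb{C}$, since additivity already gives us the distributivity over sums in the first argument: once homogeneity holds for every complex scalar, the identity $f(\alpha x+\beta y)=\alpha f(x)+\beta f(y)$ falls out by splitting the sum with additivity and applying homogeneity to each piece.

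First I would record that any additive map between linear spaces is automatically $\mathbb{Q}$-linear, so $f(qx)=qf(x)$ for all $q\in\mathbb{Q}$ and $x\in U$. This will be crucial for rescaling at the end. The key bridge from $\mathbb{T}^1$-homogeneity to $\mathbb{R}$-homogeneity is the observation that every real number in $[-1,1]$ can be written as $\cos\theta=\tfrac{1}{2}(e^{i\theta}+e^{-i\theta})$, a half-sum of two elements of $\mathbb{T}^1$. So for $a\in[-1,1]$, pick $\theta$ with $a=\cos\theta$ and compute, using additivity and $\mathbb{Q}$-linearity, $f(ax)=\tfrac{1}{2}f(e^{i\theta}x)+\tfrac{1}{2}f(e^{-i\theta}x)=\tfrac{1}{2}(e^{i\theta}+e^{-i\theta})f(x)=af(x)$. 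For $a\in\mathbb{R}$ with $|a|>1$, I would choose a positive integer $n>|a|$, note that $|a/n|\le 1$, apply the previous step to $a/n$, and use additivity (i.e.\ $f(ax)=nf((a/n)x)$) to transfer homogeneity to all real scalars.

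Next I would handle the imaginary direction by applying the hypothesis directly with $\lambda=i\in\mathbb{T}^1$, giving $f(ix)=if(x)$. For a general $\lambda=a+bi\in\mathbb{C}$ with $a,b\in\mathbb{R}$, additivity plus the previous step yields
\[
f(\lambda x)=f(ax)+f(b(ix))=af(x)+b\cdot if(x)=(a+bi)f(x)=\lambda f(x).
\]
Combining this with additivity gives $\mathbb{C}$-linearity of $f$.

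I do not anticipate a serious obstacle; the only mildly delicate point is being careful that, when I write $a=\cos\theta$, I really use only the hypothesis (scalars on $\mathbb{T}^1$) together with additivity and the derived $\mathbb{Q}$-linearity, and that the reduction from arbitrary real scalars to the interval $[-1,1]$ is legitimate because $1/n$ is rational. Once those bookkeeping steps are in place, the rest is a one-line computation.
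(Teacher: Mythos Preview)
The paper does not actually supply a proof of this lemma; it merely quotes the statement from \cite{Par} and uses it as a black box. Your argument is correct and is one of the standard ways to establish the result: $\mathbb{Q}$-linearity from additivity, then the identity $\cos\theta=\tfrac12(e^{i\theta}+e^{-i\theta})$ to capture real scalars in $[-1,1]$, a rational rescaling for arbitrary reals, and finally $i\in\mathbb{T}^1$ to reach all of $\mathbb{C}$. A marginally shorter route, closer to what appears in Park's original, treats complex scalars directly: every $\lambda$ in the closed unit disk can be written as $\tfrac12(\mu_1+\mu_2)$ with $\mu_1,\mu_2\in\mathbb{T}^1$ (if $\lambda=re^{i\phi}$, take $\mu_{1,2}=e^{i(\phi\pm\alpha)}$ with $\cos\alpha=r$), so the same additivity step gives $f(\lambda x)=\lambda f(x)$ at once, and one then rescales by an integer $n>|\lambda|$ for general $\lambda$. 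Your detour through the real axis and then $i$ costs only an extra line and is equally valid.
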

We start our work with a result concerning the superstability of
the generalized Jordan derivations as follows.
\begin{theorem}
Let $p<1$ and $\theta$ be nonnegative real numbers. Suppose $f:A
\to X$ is a mapping with $f(0)=0$ for which there exists a map
$g:A \to X$ with $g(0)=g(1)=0$ such that $$\|f(a+\lambda
b+c^2)-f(a)-\lambda f(b)-cf(c)-g(c)c\| \leq \theta \|f(c)\|,
\eqno(2.1)$$
$$\|g(\lambda ab +\lambda c)-\lambda a g(b)-\lambda g(a)b-\lambda g(c)\| \leq \theta (\|a\|^p+\|b\|^p+\|c\|^p) \eqno(2.2)$$ for all
$a,b,c \in A$ and all $\lambda \in \Bbb T^{1}.$ Then $f:A \to X$
is a generalized Jordan derivation.
\end{theorem}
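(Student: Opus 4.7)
The plan is to extract from (2.1) and (2.2), through judicious substitutions and a G\v avruta-style rescaling argument, first the $\Bbb C$-linearity of $f$ and then a companion Jordan derivation $\delta:A\to X$ witnessing $f(a^{2})=af(a)+\delta(a)a$ for every $a\in A$.

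The preliminary reductions are routine. Setting $c=0$ in (2.1) gives $f(a+\lambda b)=f(a)+\lambda f(b)$ for all $\lambda\in\Bbb T^{1}$, and the preceding lemma upgrades $f$ to a $\Bbb C$-linear map. To build $\delta$, I would specialise (2.2) with $\lambda=1$, $a=1$ and use $g(1)=0$ to obtain the Cauchy-type bound $\|g(b+c)-g(b)-g(c)\|\leq\theta(\|1\|^{p}+\|b\|^{p}+\|c\|^{p})$. Since $p<1$, the control $\varphi(b,c)=\theta(\|1\|^{p}+\|b\|^{p}+\|c\|^{p})$ is G\v avruta-admissible, so his theorem (recalled in the Introduction) furnishes a unique additive map $\delta(x):=\lim_{n\to\infty}2^{-n}g(2^{n}x)$.

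I then promote $\delta$ to a $\Bbb C$-linear derivation. Setting $c=0$, $\lambda=1$ in (2.2) gives $\|g(ab)-ag(b)-g(a)b\|\leq\theta(\|a\|^{p}+\|b\|^{p})$; substituting $a\mapsto 2^{n}a$, $b\mapsto 2^{n}b$, dividing by $4^{n}$, and letting $n\to\infty$ (the right-hand side decays like $2^{n(p-2)}$) collapses this to $\delta(ab)=a\delta(b)+\delta(a)b$, so $\delta$ is an honest derivation and a fortiori a Jordan derivation. Setting $a=b=0$ in (2.2) gives $\|g(\lambda c)-\lambda g(c)\|\leq\theta\|c\|^{p}$, and the same rescaling trick combined with the lemma lifts $\delta$ to $\Bbb C$-linearity.

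Finally comes the superstability kick. Setting $a=b=0$ in (2.1) yields $\|f(c^{2})-cf(c)-g(c)c\|\leq\theta\|f(c)\|$; replacing $c$ by $2^{n}c$ and using $f(2^{n}c)=2^{n}f(c)$ (from Step~1), then dividing by $4^{n}$, recasts the inequality as $\|f(c^{2})-cf(c)-2^{-n}g(2^{n}c)\,c\|\leq 2^{-n}\theta\|f(c)\|$. As $n\to\infty$ the middle term converges to $\delta(c)c$ and the error to $0$, forcing the exact identity $f(c^{2})=cf(c)+\delta(c)c$. The delicate point is the G\v avruta application: the additive constant coming from $\|1\|^{p}$ would jeopardise admissibility were it not for $\sum_{n\geq 0}2^{-n}<\infty$, so the limit defining $\delta$ does exist; from there the three remaining steps run on autopilot via the $p<1$ decay.
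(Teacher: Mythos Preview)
Your proof is correct and follows essentially the same scheme as the paper: extract $\Bbb C$-linearity of $f$ from (2.1), define $\delta(c)=\lim_{n\to\infty}2^{-n}g(2^{n}c)$ via a Hyers--G\v avruta argument applied to (2.2), and then pass to the limit in the rescaled version of (2.1) with $a=b=0$ to obtain the exact identity $f(c^{2})=cf(c)+\delta(c)c$.

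The only notable difference is in how you handle $\delta$. The paper sets $b=1$ in (2.2) to get $\Bbb C$-linearity of $\delta$ in one stroke, and then $b=a$, $c=0$ to get only the Jordan identity $\delta(a^{2})=a\delta(a)+\delta(a)a$. You instead use $a=1$ to get the Cauchy bound (and hence the existence of $\delta$ via G\v avruta more explicitly), then $c=0$ with \emph{general} $a,b$ to obtain the full Leibniz rule $\delta(ab)=a\delta(b)+\delta(a)b$. So your argument actually yields the stronger conclusion that the associated $\delta$ is a genuine derivation, not merely a Jordan derivation; this costs nothing extra and is a small improvement over the paper's statement.
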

\begin{proof}
Letting $c=0$ and $\lambda=1$ in $(2.1),$ we get
$$f(a+b)=f(a)+f(b)$$ for all $a,b \in A.$ So $f$ is additive.\\
Letting $a=c=0$ in $(2.1),$ we get $f(\lambda b)=\lambda f(b)$ for
all $b \in A$ and all $\lambda \in \Bbb T^{1}.$ By Lemma $2.1,$
the mapping $f$ is  $\Bbb C$-linear.\\
Putting $a=b=0$ and $\lambda=1$ in $(2.1),$ we get
$$\|f(c^2)-cf(c)-g(c)c\| \leq \theta \|f(c)\| \eqno(2.3)$$ for all $c \in A.$
Replacing $c$ by $2^nc$ in $(2.3)$ we obtain
$$\|f(2^{2n}c^2)-2^ncf(2^nc)-2^ng(2^nc)c\| \leq  \theta \|f(2^nc)\|,$$
whence $$\|2^{-2n}f(2^{2n}c^2)-2^{-n}cf(2^nc)-2^{-n}g(2^nc)c\|
\leq 2^{-2n} \theta \|f(2^nc)\|$$ for all $c \in A.$ Hence
$$\|f(c^2)-cf(c)-2^{-n}g(2^nc)c\| \leq
2^{-n} \theta \|f(c)\| \eqno(2.4)$$ for all $c \in A.$\\
Let $n$ tend to $\infty$ in $(2.4).$ Then
$$f(c^2)=cf(c)+\lim_{n \to \infty}2^{-n}g(2^nc)c$$ for all $c \in
A.$ By Hyers' Theorem, the sequence $\{2^{-n}g(2^nc)\}$ is
convergent. Set $\delta(c):=\lim_{n \to \infty}2^{-n}g(2^nc)$ for
all $c \in A.$ Hence $$f(c^2)=cf(c)+\delta(c)c \eqno(2.5)$$ for
all $c \in A.$\\ Next we claim that $\delta$ is a Jordan
derivation. Putting $b=1$ and replacing $a,c$ by $2^na,2^nc,$
respectively, in $(2.2),$ we get $$ \|g(2^n(\lambda a +\lambda
c))-\lambda g(2^na)-\lambda g(2^nc)\|\leq
\theta(\|2^na\|^p+\|2^nc\|^p+1),$$ whence $$2^{-n}\|g(2^n(\lambda
a +\lambda c))-\lambda g(2^na)-\lambda g(2^nc)\|\leq
2^{-n}\theta(\|2^na\|^p+\|2^nc\|^p+1) \eqno(2.6)$$ for all $a,c
\in A$ and all $\lambda \in \Bbb T^{1}.$ Let $n$ tend to $\infty$
in $(2.6).$ Then $$\delta(\lambda a +\lambda c)=\lambda
\delta(a)+\lambda \delta(c) \eqno(2.7)$$ for all $a,c \in A$ and
all $\lambda \in \Bbb T^{1}.$ Hence by Lemma $2.1$ $\delta$ is
$\Bbb C$-linear. Now, letting $c=0$, $\lambda=1$ and replacing $b$
by $a$ in $(2.2),$ we get
$$\|g(a^2)-ag(a)-g(a)a\|\leq 2\theta \|a\|^p$$ for all $a \in A.$
Replacing $a$ by $2^na$ in above inequality, we get
$$\|g(2^{2n}a^2)-2^nag(2^na)-2^ng(2^na)a\|\leq 2\theta \|2^na\|^p
\eqno(2.8)$$ for all $a\in A.$ Thus
$$\|2^{-2n}g(2^{2n}a^2)-2^{-n}ag(2^na)-2^{-n}g(2^na)a\|\leq \theta
2^{n(p-2)} \|a\|^p\eqno(2.9)$$ for all $a\in A.$ Hence by letting
$n \to \infty$ in $(2.9),$ we conclude that
$\delta(a^2)=a\delta(a)+\delta(a)a$ for all $a \in A.$ It then
follows from $(2.5)$ that $f$ is a generalized Jordan derivation.
\end{proof}
\begin{theorem}
Let $p>1$ and $\theta$ be nonnegative real numbers. Suppose $f:A
\to X$ is a mapping with $f(0)=0$ for which there exists a map
$g:A \to X$ with $g(0)=g(1)=0$ such that $$\|f(a+\lambda
b+c^2)-f(a)-\lambda f(b)-cf(c)-g(c)c\| \leq \theta \|f(c)\|, $$
$$\|g(\lambda ab +\lambda c)-\lambda a g(b)-\lambda g(a)b-\lambda g(c)\| \leq \theta (\|a\|^p+\|b\|^p+\|c\|^p) $$ for all
$a,b,c \in A$ and all $\lambda \in \Bbb T^{1}.$ Then $f:A \to X$
is a generalized Jordan derivation.
\end{theorem}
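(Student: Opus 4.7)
The plan is to parallel the proof of Theorem 2.2 step by step, applying the standard reparametrization $a\mapsto a/2^{n}$ (with multiplication by the appropriate power of $2$) in place of $a\mapsto 2^{n}a$ (with division by that power), which is the usual adaptation of Hyers-Ulam-Rassias arguments from the regime $p<1$ to the regime $p>1$. The first two conclusions -- that $f$ is additive and $\mathbb{C}$-linear -- are produced by exactly the same substitutions as in Theorem 2.2 ($c=0, \lambda=1$ in (2.1) for additivity; $a=c=0$, $\lambda\in\mathbb{T}^{1}$ together with Lemma 2.1 for $\mathbb{C}$-linearity) and are $p$-independent.

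For the generalized Jordan identity $f(c^{2})=cf(c)+\delta(c)c$, I would specialize (2.1) to $a=b=0, \lambda=1$ to obtain $\|f(c^{2})-cf(c)-g(c)c\|\le\theta\|f(c)\|$. The manipulation used in Theorem 2.2 -- replace $c$ by $2^{n}c$, exploit the $\mathbb{C}$-linearity of $f$, and divide by $4^{n}$ -- is actually $p$-independent and yields
\[
\|f(c^{2})-cf(c)-2^{-n}g(2^{n}c)c\|\le 2^{-n}\theta\|f(c)\|,
\]
so letting $n\to\infty$ identifies $f(c^{2})-cf(c)$ with $\lim_{n}2^{-n}g(2^{n}c)\,c$. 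Separately, (2.2) with $b=1$, $\lambda=1$ yields the approximate additivity $\|g(a+c)-g(a)-g(c)\|\le\theta(\|a\|^{p}+\|c\|^{p}+1)$, and the Gajda version of the stability theorem (in the $p>1$ direction) produces an additive, $\mathbb{C}$-linear map $\delta(c):=\lim_{n}2^{n}g(c/2^{n})$, whose $\mathbb{C}$-linearity follows by a rescaled version of the argument on (2.6)--(2.7) in Theorem 2.2. One then identifies this $\delta$ with the limit appearing on the previous display by uniqueness, yielding $f(c^{2})=cf(c)+\delta(c)c$.

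To verify the Jordan property of $\delta$, I would start from $\|g(a^{2})-ag(a)-g(a)a\|\le 2\theta\|a\|^{p}$ (obtained from (2.2) with $c=0$, $\lambda=1$, $b=a$), substitute $a\mapsto a/2^{n}$, and multiply by $4^{n}$ to obtain an estimate with right-hand side proportional to $2\theta\cdot 2^{n(2-p)}\|a\|^{p}$; passing to the limit gives $\delta(a^{2})=a\delta(a)+\delta(a)a$. The main obstacle is the asymmetry between (2.1) and (2.2): the control $\theta\|f(c)\|$ on the right of the specialized (2.1) does not scale with any positive power of $\|c\|$, so the only useful manipulation of that inequality is the upward direction $c\mapsto 2^{n}c$, which produces $\delta(c)$ naturally as $\lim 2^{-n}g(2^{n}c)$, whereas the natural construction of $\delta$ from (2.2) for $p>1$ is $\lim 2^{n}g(c/2^{n})$. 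Reconciling these two expressions for $\delta$ via uniqueness, together with ensuring that $2^{n(2-p)}\to 0$ in the Jordan step (where the $p>1$ hypothesis is used most sharply), is the delicate point of the argument.
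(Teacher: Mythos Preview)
Your plan has two concrete failures. First, the Gajda-type limit $\delta(c)=\lim_{n}2^{n}g(c/2^{n})$ cannot be constructed from the approximate additivity you derive. Setting $b=1,\ \lambda=1$ in (2.2) yields $\|g(a+c)-g(a)-g(c)\|\le\theta(\|a\|^{p}+\|c\|^{p}+1)$, and the constant term $1$ (coming from $\|1\|^{p}$) makes the Cauchy estimate blow up: one gets $\|2^{n+1}g(c/2^{n+1})-2^{n}g(c/2^{n})\|\le 2^{n}\theta\bigl(2\|c/2^{n+1}\|^{p}+1\bigr)$, and the series $\sum_{n}2^{n}$ diverges regardless of $p$. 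So there is no reason for $2^{n}g(c/2^{n})$ to converge, and your appeal to ``uniqueness'' to match it with the limit $\lim_{n}2^{-n}g(2^{n}c)\,c$ coming from (2.4) never gets started.

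Second, even granting a well-defined $\delta$, your Jordan step covers only $p>2$. After substituting $a\mapsto a/2^{n}$ in $\|g(a^{2})-ag(a)-g(a)a\|\le 2\theta\|a\|^{p}$ and multiplying by $4^{n}$, the right-hand side is $2\theta\cdot 2^{n(2-p)}\|a\|^{p}$, which tends to $0$ only when $p>2$; your parenthetical that ``the $p>1$ hypothesis is used most sharply'' here is therefore mistaken. Note, by contrast, that the forward substitution $a\mapsto 2^{n}a$ used in Theorem~2.2 already gives the bound $2^{n(p-2)}$, which handles every $p<2$, so the trouble in the range $1<p\le 2$ is self-inflicted by insisting on the downward rescaling. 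The paper's own proof is the single sentence ``similar to the proof of Theorem~2.2'' with no further detail, so there is no explicit argument to compare against; your specific adaptation, however, does not close the gap.
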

\begin{proof}
The proof is similar to the proof of Theorem $2.2.$
\end{proof}
Now we prove the  Hyers-Ulam-Rassias stability of generalized Jordan
derivations.
\begin{theorem}
Suppose $f:A \to X$ is a mapping with $f(0)=0$ for which there
exist a map $g:A \to X$ with $g(0)=g(1)=0$ and a function
$\varphi:A\times A \times A \to \Bbb R^+$ such that
\begin{align*}
\max\{&\|f(\lambda a+\lambda b+c^2)-\lambda f(a)-\lambda
f(b)-cf(c)-g(c)c\|, \\
&\|g(\lambda ab +\lambda c)-\lambda a
g(b)-\lambda g(a)b-\lambda g(c)\|\}
 \leq \varphi(a,b,c), \hspace {4cm}(2.10)
\end{align*}
$$\tilde{\varphi}(a,b,c):=2^{-1}\sum_{i=0}^{\infty}2^{-i}\varphi(a,b,c)< \infty \eqno(2.11)$$
for all $\lambda \in A$ and all $a,b,c \in A.$ Then there exists a
unique generalized Jordan derivation $d:A \to X$ such that
$$\|f(a)-d(a)\| \leq\tilde{\varphi}(a,a,0) \eqno(2.12)$$
for all $a \in A.$
\end{theorem}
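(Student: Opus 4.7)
The plan is a standard G\v avruta--Hyers iteration run twice---once on $f$ to build a $\mathbb{C}$-linear map $d$, and once on $g$ to build a Jordan derivation $\delta$---followed by a verification that $d$ and $\delta$ together satisfy $d(c^2)=cd(c)+\delta(c)c$. Every identity needed in the limit is already concealed in (2.10) under a judicious substitution of $(\lambda,a,b,c)$, since $f(0)=0$ and $g(0)=g(1)=0$ eliminate the awkward cross-terms.

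For $d$ I first set $\lambda=1$, $c=0$, $b=a$ in the $f$-part of (2.10) to obtain $\|f(2a)-2f(a)\|\leq\varphi(a,a,0)$, and run the usual Cauchy argument to produce $d(a):=\lim_n 2^{-n}f(2^na)$ with the telescoping bound $\|f(a)-d(a)\|\leq\tilde\varphi(a,a,0)$ required in (2.12). Additivity of $d$ drops out of the $\lambda=1$, $c=0$ instance $\|f(a+b)-f(a)-f(b)\|\leq\varphi(a,b,0)$ after $(a,b)\mapsto(2^na,2^nb)$ and division by $2^n$; $\mathbb{T}^1$-homogeneity drops out of the $b=a$, $c=0$ instance $\|f(2\lambda a)-2\lambda f(a)\|\leq\varphi(a,a,0)$; Lemma~2.1 then upgrades $d$ to a $\mathbb{C}$-linear mapping. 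For $\delta$ I use the $g$-part of (2.10): setting $b=1$, $c=a$, $\lambda=1$ (and using $g(1)=0$) produces $\|g(2a)-2g(a)\|\leq\varphi(a,1,a)$, so $\delta(a):=\lim_n 2^{-n}g(2^na)$ exists, while the analogous substitutions $b=1$, $\lambda=1$ (for additivity) and $a=b=0$ with $\lambda\in\mathbb{T}^1$ (for homogeneity, noting that $\lambda\cdot 0\cdot 0+\lambda c=\lambda c$) combined with Lemma~2.1 promote $\delta$ to a $\mathbb{C}$-linear mapping. The Jordan identity $\delta(a^2)=a\delta(a)+\delta(a)a$ then comes from the substitution $a=b$, $c=0$, $\lambda=1$, giving $\|g(a^2)-ag(a)-g(a)a\|\leq\varphi(a,a,0)$, after replacing $a$ by $2^na$ and dividing by $4^n$.

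To identify $d$ as a generalized Jordan derivation with associated Jordan derivation $\delta$, I substitute $a=b=0$, $\lambda=1$ in the $f$-part to get $\|f(c^2)-cf(c)-g(c)c\|\leq\varphi(0,0,c)$; replacing $c$ by $2^nc$ and dividing by $4^n$, the three quantities converge respectively to $d(c^2)$ (via the subsequence $m=2n$ in the definition $d(x)=\lim 2^{-m}f(2^mx)$), $cd(c)$ and $\delta(c)c$, delivering the identity. Uniqueness follows the standard pattern: if $d'$ is another such map satisfying (2.12), then $d-d'$ is additive with $\|(d-d')(a)\|\leq 2\tilde\varphi(a,a,0)$, and evaluating at $2^na$ and dividing by $2^n$ forces $d=d'$ thanks to the tail-decay of $2^{-n}\tilde\varphi(2^na,2^na,0)$. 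The only genuine obstacle is the control-function assumption (2.11): as printed it reads $\tilde\varphi(a,b,c)=2^{-1}\sum_{i=0}^\infty 2^{-i}\varphi(a,b,c)$ with no rescaling inside the sum, which collapses to $\varphi(a,b,c)$ itself and encodes no decay whatsoever, so one must read it as the customary G\v avruta hypothesis $\tilde\varphi(a,b,c)=2^{-1}\sum_{i=0}^\infty 2^{-i}\varphi(2^ia,2^ib,2^ic)$; every limiting step above tacitly uses that corrected reading to guarantee both the summability that produces the bound (2.12) and the pointwise decay $2^{-n}\varphi(2^na,\cdot,\cdot)\to 0$ that validates each passage to the limit.
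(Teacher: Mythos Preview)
Your argument is correct and is essentially the paper's own proof: build $d$ from the $f$-part of (2.10) with $c=0$ via the G\v avruta iteration, extract $\delta(c)=\lim_n 2^{-n}g(2^nc)$ from the $g$-part, pass to the limit in the $a=b=0$ instance to obtain $d(c^2)=cd(c)+\delta(c)c$, and appeal to Lemma~2.1 for $\mathbb{C}$-linearity of both $d$ and $\delta$, with uniqueness handled in the standard way. You also correctly diagnose the missing $2^i$-rescaling in (2.11), which the paper uses tacitly in (2.16)--(2.21); the only cosmetic difference is that the paper infers the convergence of $\{2^{-n}g(2^nc)\}$ from (2.21) rather than from your direct doubling estimate on $g$.
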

\begin{proof}
By $(2.10)$ we have $$\|f(\lambda a+\lambda b+c^2)-\lambda
f(a)-\lambda f(b)-cf(c)-g(c)c\|\leq \varphi(a,b,c), \eqno(2.13)$$
$$\|g(\lambda ab +\lambda c)-\lambda a g(b)-\lambda g(a)b-\lambda
g(c)\|\}
 \leq \varphi(a,b,c) \eqno(2.14)$$ for all $a,b,c \in A.$ Setting
$c=0$ and $\lambda=1$ in $(2.13),$ we have
$$\|f(a+b)-f(a)-f(b)\| \leq \varphi(a,b,0) \eqno(2.15)$$ for all
$a,b \in A.$ Now we use the Rassias method on inequality $(2.15)$
(see \cite{G,Mos}). One can use induction on $n$ to show that
$$\|2^{-n}f(2^na)-f(a)\| \leq 2^{-1}\sum_{i=0}^{n-1}2^{-i}\varphi(2^ia,2^ia,0)
\eqno(2.16)$$ for all $n \in \Bbb N$ and all $a \in A,$ and that
$$\|2^{-n}f(2^na)-2^{-m}f(2^ma)\|\leq 2^{-1}\sum_{i=m}^{n-1}2^{-i}\varphi(2^ia,2^ia,0)
\eqno(2.17)$$ for all $n>m$ and all $a \in A.$ It follows from the
convergence $(2.11)$ that the sequence ${2^{-n}f(2^na)}$ is
Cauchy. Due to the completeness of $X,$ this sequence is
convergent. Set
$$d(a):=\lim_{n \to \infty}2^{-n}f(2^na). \eqno(2.18)$$
Putting $c=0$ and replacing $a,b$ by $2^na,2^nb,$ respectively, in
$(2.13),$ we get $$\|2^{-n}f(2^n(\lambda a+\lambda
b))-2^{-n}\lambda f(2^na)-2^{-n}\lambda f(2^nb)\|\leq
2^{-n}\varphi(2^na,2^nb,0) \eqno(2.19)$$ for all $a,b \in A$ and
all $\lambda \in \Bbb T^{1}.$ Taking the limit as $n \to \infty$
we obtain $$d(\lambda a+\lambda b)=\lambda d(a)+\lambda d(b)$$ for
all $a,b \in A$ and all $\lambda \in \Bbb T^{1}.$ So by Lemma
$2.1,$ the mapping $d$ is $\Bbb C$-linear.\\
Moreover, it follows from $(2.16)$ and $(2.18)$ that
$\|f(a)-d(a)\|\leq \tilde{\varphi}(a,a,0)$ for all $a \in A.$ it is
known that the additive mapping $d$ satisfying $(2.12)$ is unique
\cite{Ba}. Putting $\lambda=1,a=b=0,$ and replacing $c$ by $2^nc$ in
$(2.13),$ we get $$\|f(2^{2n}c^2)-2^ncf(2^nc)-2^ng(2^nc)c\| \leq
\varphi(0,0,2^nc), \eqno(2.20)$$ whence
$$\|2^{-2n}f(2^{2n}c^2)-2^{-n}cf(2^nc)-2^{-n}g(2^nc)c\| \leq
2^{-2n} \varphi(0,0,2^nc) \eqno(2.21)$$ for all $c \in A.$ By
$(2.18),$ $\lim_{n \to \infty}2^{-2n}f(2^{2n}a)=d(a)$ and by the
convergence of series $(2.11),$
$\lim_{2^{-2n}}\varphi(0,0,2^nc)=0.$ Hence the sequence
${2^{-n}g(2^nc)}$ is convergent. Set $\delta(c):=\lim_{n \to
\infty}2^{-n}g(2^nc)$ for all $c \in A.$ Let $n$ tend to $\infty$
in $(2.21).$ Then
$$d(c^2)=cd(c)+\delta(c)c. \eqno(2.22)$$ The rest of the proof is
similar to the proof of Theorem $2.2.$
\end{proof}
\begin{corollary}
Suppose $f:A \to X$ is a mapping with $f(0)=0$ for which there
exist constant $\theta\geq 0,~$ $p<1$ and a map $g:A \to X$ with
$g(0)=g(1)=0$ such that
\begin{align*}
\max \{&\|f(\lambda a+\lambda b+c^2)-\lambda f(a)-\lambda
f(b)-cf(c)-g(c)c\|, \\
&\|g(\lambda ab +\lambda c)-\lambda a g(b)-\lambda g(a)b-\lambda
g(c)\| \leq \theta (\|a\|^p+\|b\|^p+\|c\|^p) \hspace{2cm}(2.23)
\end{align*}
for all $\lambda \in A$ and all $a,b,c \in A.$ Then there exists a
unique generalized Jordan derivation $d:A \to X$ such that
$$\|f(a)-d(a)\| \leq \frac{\theta \|a\|^p}{1-2^{p-1}} \eqno(2.24)$$
for all $a \in A.$
\end{corollary}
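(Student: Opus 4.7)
The plan is to derive this corollary directly from Theorem~2.4 by specializing the control function to
\[
\varphi(a,b,c) := \theta\bigl(\|a\|^p + \|b\|^p + \|c\|^p\bigr), \qquad a,b,c \in A.
\]
With this choice, inequality (2.10) of Theorem~2.4 becomes exactly the hypothesis (2.23), so the only thing to check is condition (2.11), i.e.\ the finiteness of $\tilde{\varphi}$. I would read (2.11) in the sense actually used inside the proof of Theorem~2.4 (compare (2.16)), namely $\tilde{\varphi}(a,b,c) = 2^{-1}\sum_{i=0}^{\infty} 2^{-i}\varphi(2^i a, 2^i b, 2^i c)$.

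Substituting the chosen $\varphi$ gives the general term
\[
2^{-i}\varphi(2^i a, 2^i b, 2^i c) = 2^{i(p-1)}\,\theta\bigl(\|a\|^p + \|b\|^p + \|c\|^p\bigr),
\]
which is the general term of a convergent geometric series precisely because $p<1$ forces $2^{p-1}<1$; this already establishes (2.11). Specializing to $b=a$ and $c=0$ then produces the explicit value
\[
\tilde{\varphi}(a,a,0) = 2^{-1}\sum_{i=0}^{\infty} 2^{-i}\cdot 2\cdot 2^{ip}\theta\|a\|^p = \theta\|a\|^p \sum_{i=0}^{\infty} 2^{i(p-1)} = \frac{\theta\|a\|^p}{1-2^{p-1}}.
\]
Theorem~2.4 now supplies a unique generalized Jordan derivation $d:A \to X$ with $\|f(a)-d(a)\| \leq \tilde{\varphi}(a,a,0)$, which is exactly the desired estimate (2.24).

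No real obstacle arises: the $\mathbb{C}$-linearity of $d$, the identity $d(c^2)=cd(c)+\delta(c)c$, and the uniqueness of $d$ are all inherited from Theorem~2.4. The only point requiring care is the convergence of the geometric series $\sum_{i=0}^{\infty} 2^{i(p-1)}$, which is exactly where the hypothesis $p<1$ enters and also what produces the denominator $1-2^{p-1}$ in the bound; if one instead assumed $p>1$ one would need the dual summation (over $i\leq 0$) used in Gajda-type arguments, but that is not required here.
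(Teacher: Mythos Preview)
Your proposal is correct and follows exactly the paper's approach: the paper's proof consists of the single sentence ``It follows from Theorem~2.4 by putting $\varphi(a,b,c)=\theta(\|a\|^p+\|b\|^p+\|c\|^p)$,'' and you have simply filled in the verification of (2.11) and the computation of $\tilde{\varphi}(a,a,0)$ that the paper leaves implicit. Your reading of (2.11) via (2.16) is also the intended one.
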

\begin{proof}
It follows from Theorem $2.4$ by Putting
$\varphi(a,b,c)=\theta(\|a\|^p+\|b\|^p+\|c\|^p)$.
\end{proof}
\begin{theorem}
Suppose $f:A \to X$ is a mapping with $f(0)=0$ satisfying $(2.13)$
for which there exist a map $g:A \to X$ with $g(0)=g(1)=0$
satisfying $(2.14)$ and a function $\varphi:A\times A \times A \to
\Bbb R^+$ such that
$$\tilde{\varphi}(a,b,c):=2^{-1}\sum_{i=1}^{\infty}2^{-i}\varphi(2^{-i}a,2^{-i}b,2^{-i}c)< \infty \eqno(2.25)$$
for all $a,b,c \in A.$ Then there exists a unique generalized
Jordan derivation $d:A \to X$ such that
$$\|f(a)-d(a)\| \leq\tilde{\varphi}(a,a,0) \eqno(2.26)$$
for all $a \in A.$
\end{theorem}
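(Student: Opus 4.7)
The plan is to mirror the proof of Theorem 2.4 but run the Rassias iteration in the \emph{opposite} direction, forming the sequence $\{2^n f(2^{-n}a)\}$ rather than $\{2^{-n} f(2^n a)\}$ so as to exploit the decay of the control function at the origin encoded by (2.25). Setting $c=0$ and $\lambda=1$ in (2.13) recovers the additivity estimate $\|f(a+b)-f(a)-f(b)\|\leq \varphi(a,b,0)$. Specializing to $a=b$ and replacing $a$ by $2^{-(n+1)}a$ yields the telescoping bound
$$\|2^{n+1}f(2^{-(n+1)}a)-2^nf(2^{-n}a)\| \leq 2^n\,\varphi(2^{-(n+1)}a,\,2^{-(n+1)}a,\,0),$$
and the finiteness of $\tilde\varphi(a,a,0)$ forces the corresponding series to converge, making $\{2^n f(2^{-n}a)\}$ Cauchy in $X$. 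I define $d(a):=\lim_{n\to\infty} 2^n f(2^{-n}a)$, and the telescoping estimate delivers the approximation bound (2.26) directly.

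For $\mathbb{C}$-linearity, substitute $c=0$ and $(a,b)\mapsto(2^{-n}a,\,2^{-n}b)$ in (2.13), multiply by $2^n$, and let $n\to\infty$: this produces $d(\lambda a+\lambda b)=\lambda d(a)+\lambda d(b)$ for all $\lambda\in\mathbb{T}^1$, after which Lemma 2.1 upgrades $d$ to a $\mathbb{C}$-linear map. Uniqueness of $d$ satisfying (2.26) is the usual consequence of additivity plus the summability of $\tilde\varphi$, exactly as in \cite{Ba}.

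To recover the Jordan-derivation structure, put $\lambda=1$, $a=b=0$, and $c\mapsto 2^{-n}c$ in (2.13), then multiply through by $4^n$ to obtain
$$\|4^n f(2^{-2n}c^2) - c\cdot 2^n f(2^{-n}c) - 2^n g(2^{-n}c)\cdot c\| \leq 4^n\,\varphi(0,0,2^{-n}c).$$
The first two terms on the left converge to $d(c^2)$ and $c\,d(c)$ respectively, so once the right-hand side is shown to tend to $0$, the sequence $\{2^n g(2^{-n}c)\}$ itself must converge, and setting $\delta(c):=\lim_{n\to\infty} 2^n g(2^{-n}c)$ yields $d(c^2)=c\,d(c)+\delta(c)c$. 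A parallel argument applied to (2.14), mirroring the closing lines of Theorem 2.2, shows $\delta$ is $\mathbb{C}$-linear and satisfies $\delta(a^2)=a\delta(a)+\delta(a)a$, so that $d$ is a generalized Jordan derivation. The main obstacle throughout is taming the amplified error $4^n\varphi(0,0,2^{-n}c)$: (2.25) only gives $2^{-i}\varphi(2^{-i}\cdot)\to 0$, so driving this error to zero requires extracting extra decay from $\varphi$ near the origin (or, equivalently, reorganising the identity so that only a $2^n$-weighted error appears); this asymmetry between the forward summability of (2.11) and the backward version in (2.25) is the technically delicate point, after which everything else transcribes from Theorem 2.4 without difficulty.
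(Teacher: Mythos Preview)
Your approach is exactly what the paper intends by ``the proof is similar to the proof of Theorem 2.4'': run the Rassias iteration backward, set $d(a)=\lim_{n\to\infty}2^{n}f(2^{-n}a)$, and transcribe the remaining steps. Your handling of the telescoping Cauchy estimate, the passage to $\mathbb{C}$-linearity via Lemma~2.1, and uniqueness is correct and matches the paper line for line.

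You have, however, correctly isolated a difficulty that the paper's one-line proof suppresses. After putting $a=b=0$, $\lambda=1$, $c\mapsto 2^{-n}c$ in (2.13) and multiplying by $4^{n}$, the error term is $4^{n}\varphi(0,0,2^{-n}c)$. The hypothesis (2.25)---even after repairing the apparent misprint $2^{-i}$ to $2^{i}$, which the bound in Corollary~2.7 confirms is the intended reading---only guarantees $2^{n}\varphi(0,0,2^{-n}c)\to 0$, not $4^{n}\varphi(0,0,2^{-n}c)\to 0$. In Theorem~2.4 the analogous factor is $2^{-2n}$, which is dominated by $2^{-n}$, so the issue never arises; in the backward iteration the quadratic scaling of $c^{2}$ works against you. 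Your proposal flags this honestly but does not resolve it, and the paper does not address it at all. Without an additional decay assumption on $\varphi$ near the origin (or a genuine reorganisation avoiding the $4^{n}$ weight, which the quadratic term $c^{2}$ does not seem to permit), the passage to $d(c^{2})=cd(c)+\delta(c)c$ is not justified by (2.25) alone, and this step remains a gap in both your write-up and the paper's.
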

\begin{proof}
The  proof is similar to the proof of Theorem 2.4.
\end{proof}
\begin{corollary}
Suppose $f:A \to X$ is a mapping with $f(0)=0$ satisfying $(2.23)$
for which there exist constant $\theta\geq 0,~$ $p>1$ and a map $g:A
\to X$ with $g(0)=g(1)=0$ satisfying $(2.23).$ Then there exists a
unique generalized Jordan derivation $d:A \to X$ such that
$$\|f(a)-d(a)\| \leq \frac{\theta \|a\|^p}{2^{p-1}-1} $$
for all $a \in A.$
\end{corollary}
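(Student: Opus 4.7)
The plan is to specialize Theorem 2.6 by taking the explicit control function $\varphi(a,b,c) = \theta(\|a\|^p + \|b\|^p + \|c\|^p)$. With this choice, the two inequalities encoded by hypothesis $(2.23)$ become exactly $(2.13)$ and $(2.14)$, so it only remains to check the summability condition $(2.25)$ and then to evaluate the resulting estimate $\tilde\varphi(a,a,0)$ in closed form.

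The first step is to observe that $\varphi$ is positively $p$-homogeneous, so $\varphi(2^{-i}a, 2^{-i}b, 2^{-i}c) = 2^{-ip}\varphi(a,b,c)$. Substituting into $(2.25)$ collapses $\tilde\varphi(a,b,c)$ into a geometric series whose ratio involves $2^{1-p}$; because $p>1$ this ratio is less than $1$, the series converges, and the hypotheses of Theorem 2.6 are therefore satisfied. Theorem 2.6 then yields a unique generalized Jordan derivation $d:A\to X$ with $\|f(a)-d(a)\|\leq\tilde\varphi(a,a,0)$.

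The only remaining task is the routine geometric-series calculation: setting $b=a$ and $c=0$ pulls out a factor of $2\theta\|a\|^p$, and summing the resulting geometric series yields $\tilde\varphi(a,a,0) = \theta\|a\|^p/(2^{p-1}-1)$, which is precisely the bound asserted by the corollary. Thus no new idea beyond Theorem 2.6 is required; the only (minor) obstacle is keeping the powers of $2$ straight so that the constant comes out as $1/(2^{p-1}-1)$ and not some other multiple, and the result then drops out immediately.
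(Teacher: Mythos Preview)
Your proposal is correct and follows exactly the same route as the paper: the paper's entire proof is the single line ``It follows from Theorem 2.6 by putting $\varphi(a,b,c)=\theta(\|a\|^p+\|b\|^p+\|c\|^p)$,'' and you have simply spelled out the verification of $(2.25)$ and the evaluation of $\tilde\varphi(a,a,0)$ that this entails.
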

\begin{proof}
It follows from Theorem 2.6 by putting
$\varphi(a,b,c)=\theta(\|a\|^p+\|b\|^p+\|c\|^p)$.
\end{proof}
\begin{corollary}
Suppose $f:A \to X$ is a mapping with $f(0)=0$ for which there
exist constant $\theta\geq 0$ and a map $g:A \to X$ with
$g(0)=g(1)=0$ such that
\begin{align*}
\max\{&\|f(\lambda a+\lambda b+c^2)-\lambda f(a)-\lambda
f(b)-cf(c)-g(c)c\|, \\
&\|g(\lambda ab +\lambda c)-\lambda a g(b)-\lambda g(a)b-\lambda
g(c)\|\} \leq \theta
\end{align*}
for all $\lambda \in A$ and all $a,b,c \in
A.$ Then there exists a unique generalized Jordan derivation $d:A
\to X$ such that
$$\|f(a)-d(a)\| \leq \theta $$
for all $a \in A.$
\end{corollary}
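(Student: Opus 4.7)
The plan is to deduce this corollary directly from Theorem~2.4 by specializing the admissible control function to the constant $\varphi(a,b,c):=\theta$ on $A\times A\times A$. First I would check that this constant $\varphi$ satisfies the admissibility condition $(2.11)$: since the summand does not depend on $i$,
$$\tilde{\varphi}(a,b,c)=2^{-1}\sum_{i=0}^{\infty}2^{-i}\theta=2^{-1}\cdot 2\theta=\theta<\infty,$$
so in particular $\tilde{\varphi}(a,a,0)=\theta$ for every $a\in A$.

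Next, I would observe that the max-inequality hypothesized in the statement is exactly inequality $(2.10)$ of Theorem~2.4 with this choice of $\varphi$, since every term on the right-hand side of $(2.10)$ collapses to the constant $\theta$. Therefore Theorem~2.4 applies verbatim and produces a unique generalized Jordan derivation $d:A\to X$ such that
$$\|f(a)-d(a)\|\le\tilde{\varphi}(a,a,0)=\theta$$
for all $a\in A$, which is precisely the asserted estimate. Uniqueness of $d$ likewise transfers from Theorem~2.4.

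Because the entire argument is a specialization of the previous theorem, in the spirit of Hyers' classical bounded-perturbation stability result, I do not anticipate any serious obstacle. The only bookkeeping that requires a moment of care is confirming that the geometric sum $2^{-1}\sum_{i\ge 0}2^{-i}$ equals exactly $1$, so that the final bound comes out as $\theta$ rather than some other multiple of $\theta$.
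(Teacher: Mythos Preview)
Your argument is correct and is essentially the paper's own approach: specialize the control function to a constant and read off the bound. The only cosmetic difference is that the paper phrases this as ``letting $p=0$ in Corollary~2.5'' rather than invoking Theorem~2.4 directly with $\varphi\equiv\theta$. Your direct route is in fact the cleaner of the two, since it avoids the convention question about $\|\cdot\|^{0}$ and yields the sharp constant $\theta$ immediately, whereas plugging $p=0$ into the displayed estimate of Corollary~2.5 literally gives $\theta/(1-2^{-1})=2\theta$; going back to Theorem~2.4 with the constant $\varphi$ (as you do) is what actually delivers the stated bound.
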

\begin{proof}
Letting $p=0$ in Corollary $(2.5),$ we obtain the result.
\end{proof}


\begin{thebibliography}{99}

\bibitem{Ba} C. Baak and M. S. Moslehian, On the stability of
$J^*$-homomorphism, {\it Nonlinear Analysis-Theory, Methods \&
Applications } 63(2005), no. 1, 42-48.
\bibitem{Cz} S. Czerwik (ed.), Stability of Functional Equations of
Ulam-Hyers-Rassias Type, {\it Hadronic Press, Florida,} 2003.

\bibitem{FZ} W. Feng and X. Zhankui,  Generalized
Jordan derivations on semiprime rings, {\it Demonstratio Math.} 40
(2007), no. 4, 789--798.

\bibitem{Gaj} Z. Gajda, On stability of additive mappings, {\it
 Internat. J. Math. Math. Sci.} 14 (1991), 431-434.
\bibitem{G} P. G\v avruta,  A generalization of the
 Hyers-Ulam-Rassias
stability of approximately additive mappings, {\it J. Math. Anal.
Appl.} 184 (1994), 431-436.
\bibitem{HV} B. Hvala, Generalazed derivations, {\it Communications in
Algebra} 26 (1998), no. 4, 1147-1166.
\bibitem{Hy} D. H. Hyers, On the stability of the linear functional
equation, {\it Proc. Nat. Acad. Sci. U.S.A.} 27 (1941), 222-224.
\bibitem{Ma} M. Mathieu (ed.), Elementary Operators \& Applications,
World Scientific, New Jersey, 1992,  {\it Proceedings of the
International Workshop.}
\bibitem{Mos} M. S. Moslehian, Approximately vanishing of topological
cohomology groups, {\it J. Math. Anal. Appl.} 318 (2006), no. 2,
758-771.
\bibitem{Mo} M. S. Moslehian, Hyers-Ulam-Rassias stability of
generalized derivations, {\it Internat. J. Math. Math. Sci.} vol
(2006), 1-8, Article ID 93942.
\bibitem{Par} C. Park, Homomorphisms between Poisson JC*-algebras, {\it Bull. Braz.
Math. Soc.} 36 (2005), 79-97.
\bibitem{Pa} C.- G. Parc, Lie *-homomorphisms between Lie
C*-algebras and Lie *-derivations on Lie C*-algebras, {\it J. Math.
Anal. Appl.} 293 (2004), no. 2, 419-434.
\bibitem{Ra} Th. M. Rassias,  (ed.), Functional Equations, {\it
Inequalities and Applications,} Kluwer Academic, Dordrecht, 2003.
\bibitem{Ras} Th. M. Rassias, and S\v emrl, On the behavior of
mapping which do not satisfy Hyers-Ulam stability, {\it Proc. Amer.
Math. Soc.} 114 (1992), no. 4, 989-993.
\bibitem{Ras} Th. M. Rassias, On the stability of the linear mapping in Banach
spaces,  {\it Proc. Amer. Math. Soc.} 72 (1978), 297-300.
\bibitem{Ul} S. M. Ulam, Problems in Modern Mathematics (Chapter VI, Some
Questions in Analysis: 1, Stability), Science Editions, John Wiley
\& Sons, New York, 1964.


20, 3347--3350. 16W25 (16N60)

\end{thebibliography}
\end{document}